\newtheorem{theorem}{Theorem}[section]
\newtheorem*{theorem*}{Theorem}
\newtheorem{corollary}[theorem]{Corollary}
\newtheorem*{corollary*}{Corollary}
\newtheorem{proposition}[theorem]{Proposition}
\newtheorem*{proposition*}{Proposition}
\newtheorem{lemma}[theorem]{Lemma}
\theoremstyle{definition}
\newtheorem{definition}[theorem]{Definition} 
\newtheorem*{definition*}{Definition}
\newtheorem{example}[theorem]{Example}
\newtheorem*{example*}{Example}
\theoremstyle{remark}
\numberwithin{equation}{section}
\newcommand{\cat}{\mathbf}
\newcommand{\incl}{\hookrightarrow}
\newcommand{\To}{\Rightarrow}
\newcommand{\Top}{\cat{Top}}
\newcommand{\Cl}{\cat{Cl}}
\title{Relative cell complexes in closure spaces}
\author{Peter Bubenik}
\address{University of Florida, Department of Mathematics}
\email{peter.bubenik@ufl.edu}
\begin{document}
  
\begin{abstract}
    We give necessary and sufficient conditions for certain pushouts of topological spaces in the category of \v{C}ech's closure spaces to agree with their pushout in the category of topological spaces.
    We prove that in these two categories, the constructions of cell complexes by a finite sequence of closed cell attachments, which attach arbitrarily many cells at a time, agree. 
    Likewise, the constructions of CW complexes relative to a compactly generated weak Hausdorff space that attach only finitely many cells, also agree. 
    On the other hand, we give examples showing that the constructions of finite-dimensional CW complexes, CW complexes of finite type, and relative CW complexes that attach only finitely many cells, need not agree. 
\end{abstract}

\subjclass{Primary: 54A05; Secondary: 18A30, 54B17, 55N31}

\maketitle

\section{Introduction} \label{sec:introduction}

The classical theory of \v{C}ech's closure spaces~\cite{Cech:1966}, also called pretopological spaces, has found renewed interest as a setting for discrete homotopy theory and topological data analysis (TDA)~\cite{Rieser:2021,bubenik2023homotopy,rieser2021grothendieck,rieser2022cofibration,bubenik2023singular,ebel2023synthetic,PalaciosVela:2019,bubenik2021eilenbergsteenrod}. 
Examples of closure spaces include topological spaces, graphs, and directed graphs.
In TDA, one is interested in metric spaces and filtered topological spaces, which are both examples of filtered closure spaces~\cite{bubenik2023homotopy}.
To develop algebraic topology using closure spaces, it is important to understand how standard constructions, such as relative cell complexes, behave in this setting.

\subsubsection*{Outline of paper}

In \cref{sec:background}, we provide definitions, notation, and preliminary results.
In \cref{sec:do-not-agree}, we give examples that demonstrate that the constructions of 
finite relative CW complexes,
countable, finite-dimensional CW complexes,
and 
CW complexes of finite type
in $\Cl$ and $\Top$ do not agree.
In contrast, 
in \cref{sec:do-agree}
we prove that under suitable hypotheses, various constructions in $\Cl$ and $\Top$ do agree.

\subsubsection*{Related work}

Sterling Ebel and Chris Kapulkin~\cite{ebel2023synthetic} proved that the constructions of finite CW complexes in the category of pseudotopological spaces and in $\Top$ agree.
This implies that the constructions of finite CW complexes in $\Cl$ and $\Top$ agree, which is a special case of both \cref{cor:main,thm:main}.
They also give a model structure for pseudotopological spaces and show that it is Quillen equivalent to the standard model structure for $\Top$.

Antonio Rieser recently asked for examples in which the construction of a cell complex in $\Top$ and the category of pseudotopological spaces, which contains $\Cl$ as a full subcategory, do not agree~\cite[Remark 5.27]{rieser2022cofibration}. 
The pushouts in \cref{ex:2,ex:3} provide such examples.


\section{Background and preliminary work} \label{sec:background}

\subsection{Closure spaces and topological spaces}
\label{sec:cl-top}

A \emph{closure space} $(X,c)$ consists of a set $X$ and an operation $c$ on subsets of $X$ such that 
\begin{enumerate*}
    \item 
$c \varnothing = \varnothing$, 
\item 
for all $A \subset X$, $A \subset c A$, and 
\item \label{it:closure-def-3}
for all $A,B \subset X$, $c(A \cup B) = c A \cup c B$.
\end{enumerate*}
Note that from \eqref{it:closure-def-3}, it follows that for $A \subset B$, $c A \subset c B$.
If $c A = A$, we say that $A$ is \emph{closed}.

A (directed) graph is an example of a closure space. The set $X$ is the set of vertices and the closure operator of a set of vertices is given by the union of the vertices and (directed) neighborhoods of those vertices.
A \emph{topological space} is a closure space $(X,c)$ such that for all $A \subset X$, $c c A = c A$.
In cases where the closure operator is clear from the context, it will be omitted from the notation, e.g., if $X$ is a standard topological space such at the $k$-dimensional sphere $S^k$ or the $k$-dimensional disk $D^k$.

Given a closure space $(X,c)$ and $A \subset X$, let $\tau(c) A$ be the intersection of all closed sets containing $A$.
Then $\tau(c)$ is a closure operator, called the \emph{topological modification} of $c$.
It is the finest closure operator coarser than $c$ such that $(X,\tau(c))$ is a topological space.
A map of closure spaces $f:(X,c_X) \to (Y,c_Y)$ is a function $f:X \to Y$ such that for all $A \subset X$, $f c_X A \subset c_Y f A$.
Equivalently, for all $B \subset Y$, $c_X f^{-1} B \subset f^{-1} c_Y B$.
Using these definitions, we obtain the category $\Cl$ of closure spaces and their maps,
the full subcategory $\Top$ of topological spaces, 
the inclusion functor $i: \Top \to \Cl$, and 
its left adjoint $\tau: \Cl \to \Top$ given by $\tau(X,c) = (X,\tau(c))$.
For more details, see \cite[Section 16 B]{Cech:1966} or \cite[Section 1]{bubenik2023homotopy}.

Colimits of closure spaces are determined by the following two constructions~\cite[Definition 17.B.1, Theorem 33.A.4, Theorem 33.A.5]{Cech:1966}.
Given a collection of closure spaces $\{(X_i,c_i)\}_{i \in I}$, their coproduct consists of the disjoint union of sets $X = \coprod_i X_i$ together with the closure operator $c$ defined by $c(\coprod_i A_i) = \coprod_i c(A_i)$.
Given maps $f,g:(X,c_X) \to (Y,c_Y)$ of closure spaces, their coequalizer consists of the closure space $(Y/\!\!\sim,c_{\bar{Y}})$,
where $f(x) \sim g(x)$ for $x \in X$, and 
for $A \subset Y/\!\!\sim$, $c_{\bar{Y}}(A) = p(c_Y(p^{-1}(A)))$,
where 
$p:Y \to Y/\!\!\sim$
is the quotient map.
It follows~\cite[Theorem 3.4.12]{Riehl:2016} that $\Cl$ has all (small) colimits.
Note that the colimits in $\Cl$ are given by colimits in the category of sets together with appropriate closure operators. 
The category $\Cl$ also has all (small) limits~\cite[Theorem 32.A.4, Theorem 32.A.10]{Cech:1966}.

Consider a diagram in $\Cl$ whose objects lie in $\Top$.
Since $i$ is a right adjoint, the limits in $\Top$ and $\Cl$ agree~\cite[Theorem 4.5.2]{Riehl:2016}.
In contrast, the colimit in $\Top$ is given by the topological modification of the colimit in $\Cl$~\cite[Proposition 4.5.15(ii)]{Riehl:2016}.

It is easy to produce diagrams of topological spaces whose colimit in $\Cl$ does not agree with the colimit in $\Top$.
For example, let $A$ be the one-point topological space $\{1\}$, let $X$ and $Y$ be the two-point topological spaces $\{0,1\}$ and $\{1,2\}$ with the indiscrete topology, and let $i:A \to X$ and $f:A \to Y$ be the inclusions.
Consider the colimit of the diagram $X \stackrel{i}{\leftarrow} A \stackrel{f}{\rightarrow} Y$.
As will see in \cref{sec:pushout}, the colimit in $\Cl$ may be given by the closure space $(\{0,1,2\},c)$ where $c(0) = \{0,1\}$, $c(1) = \{0,1,2\}$ and $c(2) = \{1,2\}$, 
together with the inclusion maps.
In contrast, the colimit in $\Top$ may be given by 
$\{0,1,2\}$ with the indiscrete topology.
In \cref{sec:do-not-agree} we will
show that 
even various cell complex constructions
in $\Top$ and $\Cl$ need not agree.

\subsection{Closed inclusions}

Given closure spaces $(X,c_X)$ and $(Y,c_Y)$ say that a map $f:X \to Y$ is \emph{closed} if for all $A \subset X$ such that $c_XA = A$, $c_YfA = fA$.

Let $(X,c_X) \in \Cl$ and let $i:A \subset X$.
The \emph{subspace closure} on $A$, denoted $c_A$, is defined by $c_A B = (c_X B) \cap A$, for all $B \subset A$.
We obtain the inclusion $i:(A,c_A) \to (X,c_X)$.

\begin{lemma} \label{lem:closed-inclusion}
  The inclusion $i: A \incl X$ is a \emph{closed inclusion} if one of the following equivalent conditions hold.
  \begin{enumerate}
      \item \label{it:A} $i$ is closed, i.e. for all $B \subset A$ such that $c_A B = B$, $c_X B = B$.
      \item \label{it:B} $A$ is closed in $X$, i.e. $c_X A = A$.
      \item \label{it:C} $c_A = c_X|_A$, i.e. for all $B \subset A$, $c_A B = c_X B$.
  \end{enumerate}
\end{lemma}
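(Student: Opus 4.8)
The plan is to prove the three conditions equivalent through the cyclic chain of implications $\eqref{it:A} \Rightarrow \eqref{it:B} \Rightarrow \eqref{it:C} \Rightarrow \eqref{it:A}$. Throughout I will use two elementary facts about $c_X$: that $B \subset c_X B$ for every $B \subset X$, and that $c_X$ is monotone, i.e. $B \subset B'$ implies $c_X B \subset c_X B'$, which the excerpt notes follows from the additivity axiom. I will also repeatedly invoke the defining formula $c_A B = (c_X B) \cap A$ for the subspace closure.

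For $\eqref{it:A} \Rightarrow \eqref{it:B}$, I would apply the hypothesis to the set $B = A$ itself. Since $A \subset c_X A$, we have $c_A A = (c_X A) \cap A = A$, so $A$ is closed with respect to $c_A$; the hypothesis then forces $c_X A = A$, which is $\eqref{it:B}$.

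For $\eqref{it:B} \Rightarrow \eqref{it:C}$, I would fix an arbitrary $B \subset A$. From $B \subset A$ and the assumption $c_X A = A$, monotonicity gives $c_X B \subset c_X A = A$, so intersecting with $A$ is redundant and $c_A B = (c_X B) \cap A = c_X B$, establishing $\eqref{it:C}$. The remaining implication $\eqref{it:C} \Rightarrow \eqref{it:A}$ is immediate: if $c_A = c_X|_A$ and $c_A B = B$, then $c_X B = c_A B = B$.

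The whole argument reduces to elementary manipulations once the closure axioms are available. The only step where the structure of a closure operator genuinely enters, rather than plain set operations, is the monotonicity used in $\eqref{it:B} \Rightarrow \eqref{it:C}$; that is the single point I would be careful to justify. I therefore expect no substantive obstacle beyond this bookkeeping.
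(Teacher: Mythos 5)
Your proof is correct and follows essentially the same route as the paper: the same cyclic chain $\eqref{it:A} \Rightarrow \eqref{it:B} \Rightarrow \eqref{it:C} \Rightarrow \eqref{it:A}$, with the same key steps (applying the hypothesis to $B=A$, and using monotonicity of $c_X$ to show $c_X B \subset A$). Your explicit justification of monotonicity via the additivity axiom is a minor elaboration the paper leaves implicit.
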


\begin{proof}
    $\eqref{it:A} \To \eqref{it:B}$
    $c_A A = c_X A \cap A = A$. Therefore, by \eqref{it:A}, $c_X A = A$.

    $\eqref{it:B} \To \eqref{it:C}$ 
    Let $B \subset A$.
    Then, by \eqref{it:B}, $c_X B \subset c_X A = A$.
    Therefore, $c_A B = (c_X B) \cap A = c_X B$.

    $\eqref{it:C} \To \eqref{it:A}$ 
    Let $B \subset A$ such that $c_A B = B$.
    Then by \eqref{it:C}, $c_X B = c_A B = B$.
\end{proof}

\begin{corollary} \label{cor:closed-inclusion}
    The composition of closed inclusions is a closed inclusion.
\end{corollary}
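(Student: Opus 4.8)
The plan is to reduce the statement to the transitivity of the relation ``is closed in,'' exploiting the equivalence of conditions furnished by \cref{lem:closed-inclusion}. Suppose $i: A \incl B$ and $j: B \incl X$ are closed inclusions; I want to show that the composite $j \circ i: A \incl X$ is again a closed inclusion.

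By \cref{lem:closed-inclusion} it suffices to verify any one of the three equivalent conditions for the composite, and condition \eqref{it:B}---that $A$ is closed in $X$---is the most economical target. First I would record what the hypotheses supply through the same equivalence: applying \eqref{it:B} to $i$ gives $c_B A = A$, and applying \eqref{it:C} to $j$ gives $c_B S = c_X S$ for every $S \subseteq B$.

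The key step is then a single substitution. Since $A \subseteq B$, I may take $S = A$ in the identity coming from $j$ to obtain $c_X A = c_B A$, and combining this with $c_B A = A$ from $i$ yields $c_X A = A$. Thus $A$ is closed in $X$, which is precisely condition \eqref{it:B} for the composite inclusion, so $j \circ i$ is a closed inclusion by \cref{lem:closed-inclusion}.

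I do not anticipate a genuine obstacle: the content is entirely the transitivity of closedness, and the only point requiring care is matching the subspace closures correctly---namely, ensuring that the closure used to test whether $A$ is closed in $X$ is $c_X$ itself rather than an intermediate subspace closure. The equivalence \eqref{it:B} $\Leftrightarrow$ \eqref{it:C} handles exactly this bookkeeping, so no separate verification that subspace closures compose is needed.
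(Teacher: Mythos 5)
Your argument is correct, and it is the evidently intended one: the paper leaves the corollary without proof precisely because it follows immediately from the equivalences of \cref{lem:closed-inclusion} in the way you describe (condition \eqref{it:B} for the inner inclusion plus condition \eqref{it:C} for the outer one, applied to $S = A$). Your closing remark is also sound, since the subspace closure on $A$ induced from $B$ automatically agrees with the one induced from $X$, so no extra compatibility check is required.
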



\subsection{Pushouts} \label{sec:pushout}

From the definition of the coproduct and the coequalizer in $\Cl$, we obtain the following definition of the pushout of closure spaces.

\begin{definition} \label{def:pushout}
    Consider the maps of closure spaces $f$ and $i$ in the following diagram.
    \begin{equation} \label{cd:pushout}
        \begin{tikzcd} 
            (A,c_A) \ar[r,"f"] \ar[d,"i"] & (Y,c_Y) \ar[d,dashed,"j"] \\
            (X,c_X) \ar[r,dashed,"g"] & (Z,c_Z)   
        \end{tikzcd}
    \end{equation}
    The \emph{pushout} of $f$ and $i$
    may be given by the 
    pushout in the category of sets
        \begin{equation*}
        \begin{tikzcd} 
            A \ar[r,"f"] \ar[d,"i"] & Y \ar[d,dashed,"j"] \\
            X \ar[r,dashed,"g"] & Z   
        \end{tikzcd}
    \end{equation*}
    together with the closure operator $c_Z$ on $Z$    
    defined for $B \subset Z$ by
    \begin{equation} \label{eq:pushout-closure}
        c_Z B = j c_Y j^{-1}B \cup g c_X g^{-1} B.
    \end{equation}
\end{definition}

\subsection{Pushouts along closed inclusions}

For a pushout along a closed inclusion, we have the following result.



\begin{lemma} \label{lem:closed-inclusion-pushout}
    Consider the maps $f:A \to Y$ and $i:A \incl X$ in \eqref{cd:pushout}.
    Assume that $i$ 
    is a closed inclusion.
    Then for the pushout, we may take: $Z = Y \amalg (X \setminus A)$; $j: Y \incl Z$; $g:X \to Z$ given by $g(x) = f(x)$ if $x \in A$ and $g(x)=x$ if $x \in X \setminus A$; and for $B \subset Z$, 
    \begin{equation*}
        c_Z B = c_Y(B \cap Y) \cup g (c_X(B \cap (X \setminus A))).
    \end{equation*}
    Furthermore, $j$ is a closed inclusion.
\end{lemma}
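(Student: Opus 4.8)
The plan is to adopt the concrete model $Z = Y \amalg (X \setminus A)$ with the stated maps $j$ and $g$, confirm that it is the set-level pushout of $f$ and $i$, and then show that the general closure formula \eqref{eq:pushout-closure} of \cref{def:pushout} simplifies to the claimed one. Since $i$ is a closed inclusion it is in particular an injection of a subset, so the pushout in $\mathbf{Set}$ glues $X$ and $Y$ along $A$ with no further identifications: the points of $Y$ survive, each $a \in A \subset X$ is identified with $f(a)$, and the points of $X \setminus A$ are adjoined freely. This yields $Z = Y \amalg (X \setminus A)$ with $j$ the inclusion and $g$ acting by $f$ on $A$ and by the identity on $X \setminus A$. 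This first step is routine bookkeeping.

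For the closure operator, fix $B \subset Z$ and compute the preimages in \eqref{eq:pushout-closure}. As $j$ is the inclusion of $Y$ we get $j^{-1}B = B \cap Y$, and splitting $X$ into $A$ and $X \setminus A$ and using that $g$ agrees with $f$ on $A$ and with the identity on $X \setminus A$ gives $g^{-1}B = f^{-1}(B \cap Y) \cup (B \cap (X \setminus A))$. Applying additivity of the closure operator to this union and then applying $g$, formula \eqref{eq:pushout-closure} becomes
\[
  c_Z B = c_Y(B \cap Y) \;\cup\; g\bigl(c_X f^{-1}(B \cap Y)\bigr) \;\cup\; g\bigl(c_X(B \cap (X \setminus A))\bigr).
\]
It therefore suffices to absorb the middle term into the first, i.e. to prove $g\bigl(c_X f^{-1}(B \cap Y)\bigr) \subset c_Y(B \cap Y)$.

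This absorption is the crux of the argument and is exactly where the closed-inclusion hypothesis enters. Writing $C = B \cap Y$, we have $f^{-1}C \subset A$. Because $i$ is a closed inclusion, \cref{lem:closed-inclusion} gives both that $A$ is closed in $X$ and that $c_A = c_X|_A$. Monotonicity then yields $c_X f^{-1}C \subset c_X A = A$, so in fact $c_X f^{-1}C = c_A f^{-1}C \subset A$; hence $g$ acts on this set by $f$, and since $f\colon (A,c_A) \to (Y,c_Y)$ is a map of closure spaces we obtain $f\bigl(c_A f^{-1}C\bigr) \subset c_Y\bigl(f f^{-1}C\bigr) \subset c_Y C$. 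Combining these gives $g\bigl(c_X f^{-1}C\bigr) \subset c_Y C$, as required, and the simplified formula follows.

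Finally, to see that $j$ is a closed inclusion I would evaluate the simplified formula on subsets of $j(Y)$. For $D \subset Y$ we have $jD \cap Y = D$ and $jD \cap (X \setminus A) = \varnothing$, so $c_Z(jD) = c_Y D$. Taking $D = Y$ shows $c_Z(jY) = c_Y Y = jY$, so $j(Y)$ is closed in $Z$; the same computation shows that the subspace closure on $Y$ inherited from $c_Z$ equals $c_Y$, so $j$ realizes $(Y,c_Y)$ as a subspace. The conclusion that $j$ is a closed inclusion then follows from \cref{lem:closed-inclusion}. The only genuinely delicate point is the absorption step above; everything else is a direct computation with the coproduct–coequalizer description of the pushout.
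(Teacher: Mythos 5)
Your proposal is correct and follows essentially the same route as the paper: both reduce the pushout closure formula \eqref{eq:pushout-closure} to the stated one by showing that the term $g\,c_X f^{-1}(B\cap Y)$ is absorbed into $c_Y(B\cap Y)$, using \cref{lem:closed-inclusion} to identify $c_X$ with $c_A$ on subsets of $A$ and the fact that $f$ is a map of closure spaces. The concluding observation that $c_Z$ restricts to $c_Y$ on $Y$, hence $j$ is a closed inclusion by \cref{lem:closed-inclusion}\eqref{it:C}, also matches the paper.
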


\begin{proof}
    If $i:A \incl X$ is an inclusion, then we have the pushout in the category of sets
       \begin{equation*}
        \begin{tikzcd} 
            A \ar[r,"f"] \ar[d,"i"] & Y \ar[d,dashed,"j"] \\
            X \ar[r,dashed,"g"] & Z,   
        \end{tikzcd}
    \end{equation*}
    where $Z = Y \amalg (X \setminus A)$, $j$ is the inclusion, and $g$ is given by $g(x) = f(x)$ if $x \in A$ and $g(x)=x$ if $x \in S \setminus A$.

    Let $B \subset Z$. Then $B = (B \cap Y) \cup (B \cap (X \setminus A)$.
    Therefore $c_Z B = c_Z(B \cap Y) \cup c_Z(B \cap (X \setminus A))$.
    By \eqref{eq:pushout-closure}, $c_Z(B \cap (X \setminus A)) = g c_X (B \cap (X \setminus A))$.
    By \eqref{eq:pushout-closure}, $c_Z(B \cap Y) = c_Y(B \cap Y) \cup g c_X (g^{-1} (B \cap Y))$.
    For the right hand term, note that $g^{-1}(B \cap Y) = f^{-1}(B \cap Y) \subset A$.
    By \cref{lem:closed-inclusion}\eqref{it:C}, 
    $c_X(g^{-1}(B \cap Y)) = c_X( f^{-1}(B \cap Y)) = c_A(f^{-1}(B \cap Y)) \subset f^{-1}(c_Y(B \cap Y))$.
    Thus $g c_X g^{-1}(B \cap Y) \subset g f^{-1} c_Y(B \cap Y) = f f^{-1} c_Y(B \cap Y) \subset c_Y(B \cap Y)$.
    Therefore $c_Z(B \cap Y) = c_Y(B \cap Y)$.

    We have just shown that for $B \subset Y$, $c_Z(B) = c_Y(B)$.
    Therefore by \cref{lem:closed-inclusion}\eqref{it:C}, the inclusion $j:Y \incl Z$ is a closed inclusion.
\end{proof}

\subsection{Relative CW closure spaces}

Say that a closure space $(X,c)$ is obtained from a closure space $(A,b)$ by \emph{attaching cells} if there exists the following pushout,
\begin{equation} \label{eq:attaching}
    \begin{tikzcd} 
        \coprod_{k \in K} S_k^{n_k-1} \ar[r,"\varphi"] \ar[d,hookrightarrow,"i"] & (A,c_A) \ar[d,dashed,hookrightarrow,"j"] \\
        \coprod_{k \in K} D_k^{n_k} \ar[r,dashed,"\Phi"] & (X,c_X)  
    \end{tikzcd}
\end{equation}   
where the cardinality of $K$ is arbitrary, and for each $k \in K$, $D_k^{n_k}$ is the $n_k$-dimensional disk and the restriction of $i$ to
$S_k^{n_k-1}$ is the inclusion of its boundary.
Call $\varphi$ the \emph{cell-attaching map}.
Say that $(X,c_X)$ is obtained from $(A,c_A)$ by \emph{attaching $n$-cells}
if for each $k \in K$, $n_k=n$.

\begin{lemma} \label{lem:pushout-closed-inclusions}
    In the pushout \eqref{eq:attaching}, both $i$ and $j$ are closed inclusions.
\end{lemma}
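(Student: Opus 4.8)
The plan is to treat the two maps separately: I would first prove that $i$ is a closed inclusion by a short point-set argument, and then obtain that $j$ is a closed inclusion essentially for free from \cref{lem:closed-inclusion-pushout}, whose final assertion is exactly the statement needed for $j$ once the hypothesis on $i$ is in place.

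For $i$, I would invoke the equivalence established in \cref{lem:closed-inclusion}: it suffices to verify condition~\eqref{it:B}, namely that the domain $\coprod_{k \in K} S_k^{n_k-1}$ is closed inside $\coprod_{k \in K} D_k^{n_k}$. The essential input is the classical fact that each boundary sphere $S_k^{n_k-1}$ is a closed subset of the disk $D_k^{n_k}$ in the standard topology, so that $c_{D_k^{n_k}}(S_k^{n_k-1}) = S_k^{n_k-1}$. I would then combine this with the componentwise description of the coproduct closure recalled in \cref{sec:cl-top}, namely $c(\coprod_i A_i) = \coprod_i c(A_i)$. Applying it with each component equal to the corresponding sphere gives
\begin{equation*}
    c\Bigl(\coprod_{k \in K} S_k^{n_k-1}\Bigr) = \coprod_{k \in K} c_{D_k^{n_k}}\bigl(S_k^{n_k-1}\bigr) = \coprod_{k \in K} S_k^{n_k-1},
\end{equation*}
so the domain is closed and \cref{lem:closed-inclusion}\eqref{it:B} holds. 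One should also note here that the standard topology on each sphere coincides with the subspace closure it inherits from the disk, which is what allows \cref{lem:closed-inclusion} to be applied in the first place.

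Having shown that $i$ is a closed inclusion, the claim for $j$ requires no further work: matching the pushout \eqref{eq:attaching} to the hypotheses of \cref{lem:closed-inclusion-pushout}, with the lemma's $A$ played by $\coprod_{k \in K} S_k^{n_k-1}$, its $X$ by $\coprod_{k \in K} D_k^{n_k}$, and its $Y$ and $Z$ by the two right-hand objects $(A,c_A)$ and $(X,c_X)$, the concluding sentence of that lemma delivers directly that $j$ is a closed inclusion.

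I do not anticipate a genuine obstacle in this argument. The only points demanding care are bookkeeping ones: confirming that each sphere carries the subspace closure from its disk so that \cref{lem:closed-inclusion} genuinely applies, and correctly aligning the objects of the cell-attachment diagram \eqref{eq:attaching} with the variable names of \cref{lem:closed-inclusion-pushout}, whose symbols $A$, $X$, $Y$, $Z$ must not be conflated with the homonymous symbols appearing in \eqref{eq:attaching}.
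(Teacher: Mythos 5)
Your proposal is correct and follows essentially the same route as the paper: the paper likewise verifies that $i$ is a closed inclusion via \cref{lem:closed-inclusion}\eqref{it:B} together with the componentwise description of the coproduct closure, and then cites \cref{lem:closed-inclusion-pushout} to conclude that $j$ is a closed inclusion. Your write-up merely makes explicit the details (each sphere being closed in its disk, and the alignment of variable names) that the paper leaves implicit.
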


\begin{proof}
    By \cref{lem:closed-inclusion}\eqref{it:B} and the definition of the coproduct in $\Cl$, $i$ is a closed inclusion.
    By \cref{lem:closed-inclusion-pushout}, $j$ is a closed inclusion.    
\end{proof}

Given a closure space $(X,c)$ and closed subspace $(A,c_A)$, the pair $((X,c),(A,c_A))$ is a \emph{relative CW closure space} if $(X,c)$ is the colimit of a diagram
\begin{equation} \label{eq:cw}
    (A,c_A) = (X^{-1},c_{-1}) \incl (X^0,c_0) \incl (X^1,c_1) \incl (X^2,c_2) \incl \cdots,
\end{equation}
where for each $n \geq 0$, $(X^n,c_n)$ is obtained from $(X^{n-1},c_{n-1})$ by attaching $n$-cells.
Note that at each step we attach arbitrarily many cells, but that the dimension of the cells is strictly increasing.
If at each step we attach only finitely many cells, then we say that $(X,A)$ is a relative CW closure space \emph{of finite type}.
If we attach only finitely many cells (in all of \eqref{eq:cw}), then we say that $(X,A)$ is a \emph{finite} relative CW closure space.
If we attach countably many cells, then we say that $(X,A)$ is a \emph{countable} relative CW closure space.
If for some $n$, $X = X^n$, then we say that $(X,A)$ is a \emph{finite-dimensional} relative CW complex.
In the case that $A = \varnothing$, say that $(X,A)$ is a \emph{CW closure space}.
Call the sequence $(\varphi^n)$ of cell-attaching maps a \emph{(relative) CW structure on $(X,A)$}.
By the definition of the colimit in $\Cl$, 
we may take $X = A \cup \bigcup_{n\geq 0} X^n$ 
and for $B \subset X$,
$cB = \bigcup_{n \geq -1} j_n c_n j_n^{-1} B$, where $j_n$ is the inclusion $X^n \subset X$.

\begin{proposition}
    For a relative CW structure \eqref{eq:cw} on $(X,A)$, for each $n \geq -1$, the canonical inclusion $j_n: X^n \incl X$ is a closed inclusion.
\end{proposition}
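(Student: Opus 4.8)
The plan is to verify condition \eqref{it:B} of \cref{lem:closed-inclusion} for $j_n$: I will show that $X^n$ is closed in $X$, i.e. $c(X^n) = X^n$, and closedness of the inclusion follows immediately.

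First I record the building blocks. By \cref{lem:pushout-closed-inclusions}, each inclusion $X^{m-1} \incl X^m$ appearing in \eqref{eq:cw} is a closed inclusion. Hence, by \cref{cor:closed-inclusion}, for every $m \geq n$ the composite $X^n \incl X^m$ is again a closed inclusion; applying \cref{lem:closed-inclusion}\eqref{it:B} to this composite yields $c_m(X^n) = X^n$ for all $m \geq n$. This is the crucial observation: although $X$ is an infinite colimit, $X^n$ is already closed at every finite stage above it.

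Next I compute $c(X^n)$ directly from the colimit closure formula $cB = \bigcup_{m \geq -1} j_m c_m j_m^{-1} B$ stated just before the proposition, with $B = X^n$. Since the $X^m$ are nested, $j_m^{-1}(X^n) = X^m \cap X^n$ equals $X^m$ when $m \leq n$ and equals $X^n$ when $m \geq n$. For $m \leq n$ I get $j_m c_m j_m^{-1}(X^n) = j_m c_m(X^m) = X^m \subset X^n$, using $c_m X^m = X^m$. For $m \geq n$ I get $j_m c_m j_m^{-1}(X^n) = j_m c_m(X^n) = X^n$, using the identity $c_m(X^n) = X^n$ established above. Taking the union over all $m \geq -1$ therefore gives $c(X^n) = X^n$, as desired.

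There is no serious obstacle here; the only point requiring care is the bookkeeping in the colimit closure formula, namely separating the indices $m \leq n$ (where the preimage fills all of $X^m$) from $m \geq n$ (where the preimage is exactly $X^n$ and closedness of $X^n$ in the finite stage $X^m$ is what is invoked). The essential input is that closedness propagates through the finitely many composed inclusions via \cref{cor:closed-inclusion}, so that each term $j_m c_m j_m^{-1}(X^n)$ already sits inside $X^n$ and the infinite union collapses to $X^n$.
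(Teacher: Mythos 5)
Your proof is correct, and it takes a genuinely different (though closely related) route from the paper's. You verify condition \eqref{it:B} of \cref{lem:closed-inclusion} --- that the subset $X^n$ is closed in $(X,c)$ --- by evaluating the colimit closure formula on the single set $X^n$. The paper instead verifies condition \eqref{it:C}: it takes an arbitrary $B \subset X^n$ and runs the same index-splitting argument ($k \leq n$ versus $k \geq n$) to show $cB = c_n B$. Both arguments rest on the same inputs, namely \cref{lem:pushout-closed-inclusions} and \cref{cor:closed-inclusion}, which give that each $X^n \incl X^m$ for $m \geq n$ is a closed inclusion. What the paper's route buys is slightly more: the identity $cB = c_n B$ for all $B \subset X^n$ shows not only that $X^n$ is a closed subset but that the closure operator $c_n$ produced by the CW construction coincides with the subspace closure induced by $c$. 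That identification is implicitly needed even to apply \cref{lem:closed-inclusion} to the map $j_n\colon (X^n,c_n) \to (X,c)$, since the lemma is stated for a subset equipped with its subspace closure; your argument establishes that the subset $X^n$ is closed but leaves the comparison of $c_n$ with the subspace closure unaddressed. The fix is cheap and already contained in what you wrote: the same $m \leq n$ versus $m \geq n$ bookkeeping goes through verbatim with $X^n$ replaced by an arbitrary $B \subset X^n$, using \cref{lem:closed-inclusion}\eqref{it:C} for the finite-stage inclusions in place of \eqref{it:B}, and that is exactly the paper's proof.
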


\begin{proof}
    Consider a closure space $(X,c)$ with closed subspace $(A,c_A)$ and a relative CW structure~\eqref{eq:cw}.
    By 
    \cref{lem:pushout-closed-inclusions}
    and \cref{cor:closed-inclusion}, 
    for each $-1 \leq i \leq j$, $X^i \incl X^j$ is a closed inclusion.
    Let $B \subset X^n$.
    Then $cB = \bigcup_{k \geq -1} j_k c_k j_k^{-1} B = \bigcup_{k \geq -1} j_k c_k (B \cap X^k)$.
    If $k \leq n$ then  
    by \cref{lem:closed-inclusion}\eqref{it:C},
    $c_k(B \cap X^k) = c_n(B \cap X^k) \subset c_n(B)$.
    If $k \geq n$ then $c_k(B \cap X^k) = c_k(B) = c_n(B)$
    by \cref{lem:closed-inclusion}\eqref{it:C}.
    Thus $cB = c_n B$.
    Therefore by \cref{lem:closed-inclusion}\eqref{it:C}, $j_n:X^n \incl X$ is a closed inclusion.
\end{proof}

\section{Constructions that may differ in \texorpdfstring{$\Top$}{Top} and \texorpdfstring{$\Cl$}{Cl}} \label{sec:do-not-agree}

We prove that various standard constructions on topological spaces can produce different spaces when performed in the category of closure spaces.
The definition of a pushout in $\Cl$ is given in \cref{def:pushout}. In the case of a pushout of a closed inclusion, which includes all of the examples in this section, an explicit description of this pushout is given in \cref{lem:closed-inclusion-pushout}.
As observed in \cref{sec:cl-top}, the pushout in $\Top$ may be obtained from the pushout in $\Cl$ by taking the topological modification~\cite[Proposition 4.5.15(ii)]{Riehl:2016}.

\begin{example} \label{ex:1}
    Consider the following diagram of topological spaces, 
    \[
    \begin{tikzcd}
        S^{n-1} \ar[r,"f"] \ar[d,hookrightarrow,"i"'] & \{0,1\} \\ 
        D^n 
    \end{tikzcd}
    \]
    in which 
    $\{0,1\}$ has the indiscrete topology, 
    $n \geq 1$,
    $i$ is the inclusion of the boundary of the $n$-dimensional disk, 
    and
    $f$ is the constant map with value $0$.
    Then the pushout in $\Cl$ is given by $(Z,c)$, where
    the set $Z$ may be taken to be the disjoint union of $\{0,1\}$ and the set $D^n \setminus S^{n-1}$.
    Furthermore, $c(D^n \setminus S^{n-1}) = (D^n \setminus S^{n-1}) \cup \{0\}$
    and $c^2(D^n \setminus S^{n-1}) = (D^n \setminus S^{n-1}) \cup \{0,1\}$.
    Therefore, the pushouts in $\Cl$ and $\Top$ do not agree.    
\end{example}

From \cref{ex:1}, we have the following.

\begin{proposition} \label{prop:ex-1}
    The constructions of finite relative CW complexes in $\Top$ and $\Cl$ need not agree.
\end{proposition}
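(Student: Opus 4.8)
The plan is to recognize the pushout of \cref{ex:1} as a finite relative CW complex and then to argue that the $\Cl$-pushout computed there fails to be a topological space, so that it cannot coincide with the corresponding $\Top$-pushout.

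First I would observe that the diagram in \cref{ex:1} is precisely an instance of the single cell attachment \eqref{eq:attaching}, with indexing set $K$ a singleton and base space $A = \{0,1\}$. To package this as a relative CW structure in the sense of \eqref{eq:cw}, I would set $X^{-1} = X^0 = \cdots = X^{n-1} = \{0,1\}$, attaching no cells (that is, taking $K = \varnothing$) at each of these stages, and then attach the single $n$-cell at stage $n$ to obtain $X^n = X$, with $X^m = X$ for all $m > n$. The colimit of this sequence is $X$, and the total number of cells attached is one, so $(X,\{0,1\})$ is a finite relative CW complex. I would also record that $\{0,1\}$ is a closed subspace of the pushout: applying \cref{lem:closed-inclusion-pushout} to the closed inclusion $i : S^{n-1} \incl D^n$, the closure of $\{0,1\}$ in $Z$ is $c_Y(\{0,1\}) \cup g(c_X(\varnothing)) = \{0,1\}$, so the pair $(X,\{0,1\})$ legitimately qualifies as a relative CW closure space.

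Next I would invoke the computation already carried out in \cref{ex:1}. There the $\Cl$-pushout $(Z,c)$ satisfies $c(D^n \setminus S^{n-1}) = (D^n \setminus S^{n-1}) \cup \{0\}$ while $c^2(D^n \setminus S^{n-1}) = (D^n \setminus S^{n-1}) \cup \{0,1\}$; hence $c$ is not idempotent and $(Z,c)$ is not a topological space. Since the $\Top$-pushout is the topological modification of the $\Cl$-pushout~\cite[Proposition 4.5.15(ii)]{Riehl:2016} and is therefore always a topological space, its closure operator differs from $c$. Thus the two pushouts are distinct closure spaces, exhibiting a single finite relative CW complex construction whose outcomes in $\Top$ and $\Cl$ disagree, which proves the proposition.

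The main obstacle here is not computational, since \cref{ex:1} already performs the essential closure calculation; rather it is one of bookkeeping. One must verify that a single cell attachment genuinely fits the definition of a finite relative CW complex, which requires permitting stages at which no cells are attached (the case $K = \varnothing$) so that an $n$-cell can be introduced at its appropriate dimension, and one must confirm that the base $\{0,1\}$ is a closed subspace so that the hypotheses of the relative CW closure space definition are met.
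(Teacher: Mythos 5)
Your proposal is correct and follows the same route as the paper, which simply derives the proposition from the computation in \cref{ex:1}: the $\Cl$-pushout there has a non-idempotent closure operator, so it cannot equal the $\Top$-pushout. The extra bookkeeping you supply (padding the skeleton with empty attaching stages so the single $n$-cell is attached in dimension $n$, and checking via \cref{lem:closed-inclusion-pushout} that $\{0,1\}$ is closed in $Z$) is left implicit in the paper but is consistent with its definitions and makes the argument more complete.
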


\begin{example} \label{ex:2}
    Consider the following diagram of topological spaces, 
    \[
    \begin{tikzcd}
        \coprod_{k=1}^{\infty} S^1 \ar[r,"f"] \ar[d,hookrightarrow,"i"] & \left[0,1 \right] \\ 
        \coprod_{k=1}^{\infty} D^2 
    \end{tikzcd}
    \]
  where $f = \coprod_k f_k$ with $f_k$ given by the constant map $x \mapsto \frac{1}{k}$.
  Then in the pushout $(Z,c)$ in $\Cl$, 
  $c(\coprod_k (D^2 \setminus S^1)) = \coprod_k (D^2 \setminus S^1) \cup \{\frac{1}{k}\}_{k=1}^{\infty}$ and
  $c^2(\coprod_k (D^2 \setminus S^1)) = \coprod_k (D^2 \setminus S^1) \cup \{\frac{1}{k}\}_{k=1}^{\infty} \cup \{0\}$.
    Therefore, the pushouts in $\Cl$ and $\Top$ do not agree.
\end{example}

From \cref{ex:2}, we have the following.

\begin{proposition} \label{prop:ex-2}
    The constructions of countable finite-dimensional CW complexes in $\Top$ and $\Cl$ need not agree.
\end{proposition}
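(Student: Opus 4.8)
The plan is to realize the pushout of \cref{ex:2} as the top-dimensional cell attachment of a single CW closure space that is simultaneously countable and finite-dimensional, so that the failure of agreement already exhibited in \cref{ex:2} is inherited by the whole complex. Because the complex will have dimension $2$, its underlying space is exactly this final pushout, and there are no higher cells that could restore idempotency of the closure operator.

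Concretely, I would build the skeletal filtration in three stages. Let $X^0 = \{0,1\}$ be two $0$-cells, and attach a single $1$-cell along $S^0 \to X^0$ to obtain $X^1$. Then $X^1$ is the finite CW complex $[0,1]$, and since finite CW complexes agree in $\Cl$ and $\Top$~\cite{ebel2023synthetic} (see also \cref{cor:main,thm:main}), the space $X^1 = [0,1]$ and its closure operator are the same whether computed in $\Cl$ or in $\Top$. Finally, attach countably many $2$-cells along the maps $f_k\colon S^1 \to X^1$, $f_k \equiv \tfrac{1}{k}$, of \cref{ex:2}, producing $X = X^2$. The only thing to verify about this data is that the $f_k$ are legitimate attaching maps, i.e. that each image $\tfrac{1}{k}$ lies in the $1$-skeleton $X^1 = [0,1]$; this is immediate, as $\tfrac{1}{k}$ lies in the interior of the $1$-cell.

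This CW structure uses $2 + 1 + \aleph_0$ cells, so it is countable, and it satisfies $X = X^2$, so it is finite-dimensional. Since the $1$-skeleton $[0,1]$ is the same space in both categories, the final attachment is exactly the pushout computed in \cref{ex:2}. That computation yields $c \neq c^2$ on $X$, so the closure operator obtained in $\Cl$ is not idempotent; hence $X$ is not a topological space and differs from the finite-dimensional CW complex built from the same cell data in $\Top$, which is the topological modification of the $\Cl$ pushout.

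The essential content of the argument is supplied verbatim by \cref{ex:2}; the work that remains is purely organizational, and this is where I expect the only real care to be needed. One must check that every stage strictly below the top dimension produces identical spaces in $\Cl$ and $\Top$, so that the two constructions share the $1$-skeleton $[0,1]$ on which \cref{ex:2} is founded, and one must confirm that the chosen cells genuinely assemble into a valid, countable, finite-dimensional CW structure whose terminal stage is precisely the pushout of \cref{ex:2}.
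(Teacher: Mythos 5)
Your proposal is correct and follows the paper's approach: the paper's proof of \cref{prop:ex-2} is precisely the observation that the pushout of \cref{ex:2} is the top-dimensional cell attachment of a countable, finite-dimensional CW complex whose lower skeleta (being a finite CW complex, namely $[0,1]$) agree in $\Cl$ and $\Top$. You have simply made explicit the CW structure and the bookkeeping that the paper leaves implicit.
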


\begin{example} \label{ex:3}
    Consider the following diagram of topological spaces, 
    \[
    \begin{tikzcd}
        \coprod_{k=1}^{\infty} S^k \ar[r,"f"] \ar[d,hookrightarrow,"i"] & \left[0,1 \right] \\ 
        \coprod_{k=1}^{\infty} D^{k+1} 
    \end{tikzcd}
    \]
  where $f = \coprod_k f_k$ with $f_k$ given by the constant map $x \mapsto \frac{1}{k}$.
  Then in the pushout $(Z,c)$ in $\Cl$, 
  $c(\coprod_k (D^{k+1} \setminus S^k)) = \coprod_k (D^{k+1} \setminus S^k) \cup \{\frac{1}{k}\}_{k=1}^{\infty}$ and
  $c^2(\coprod_k (D^{k+1} \setminus S^k)) = \coprod_k (D^{k+1} \setminus S^k) \cup \{\frac{1}{k}\}_{k=1}^{\infty} \cup \{0\}$.
    Therefore, the pushouts in $\Cl$ and $\Top$ do not agree.
\end{example}

From \cref{ex:3}, we have the following.

\begin{proposition} \label{prop:ex-3}
    The constructions of CW complexes of finite type in $\Top$ and $\Cl$ need not agree.
\end{proposition}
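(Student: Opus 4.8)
The plan is to realize the cells of \cref{ex:3} as a genuine CW complex of finite type and then to show that its colimit in $\Cl$ is not a topological space; since the colimit in $\Top$ always is, the two constructions cannot agree. First I would equip the interval $[0,1]$ with the finite CW structure having $0$-skeleton $X^0 = \{0,1\}$ and a single $1$-cell, so that $X^1 = [0,1]$ carries its usual topology. Then, for each $k \geq 1$, I would form $X^{k+1}$ from $X^k$ by attaching one $(k+1)$-cell $D^{k+1}$ along its boundary $S^k$ via the constant map $f_k$ of value $\tfrac{1}{k}$; this is legitimate because $\tfrac{1}{k} \in [0,1] = X^1 \subseteq X^k$. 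As exactly one cell is attached in each dimension, the resulting CW closure space is of finite type, and it uses precisely the cells of \cref{ex:3}.

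Next I would compute the closure operator $c$ of the colimit $X = \colim_n X^n$ directly from the formula $cB = \bigcup_{n \geq -1} j_n c_n j_n^{-1} B$ recorded after \eqref{eq:cw}. Taking $B = \coprod_{k \geq 1}(D^{k+1} \setminus S^k)$ to be the union of all open cells, the trace $B \cap X^n$ is the union of the finitely many open cells of dimension at most $n$, and its closure $c_n(B \cap X^n)$ in the skeleton $X^n$ adjoins exactly their attaching points $\{\tfrac{1}{k} : k \leq n-1\}$. Taking the union over $n$ then yields $cB = B \cup \{\tfrac{1}{k} : k \geq 1\}$, in agreement with \cref{ex:3}. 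The crucial point, which I expect to be the delicate step, is that $0 \notin cB$: at each finite stage only finitely many of the points $\tfrac{1}{k}$ are present, so their accumulation point is never produced by a single skeletal closure.

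Finally I would apply $c$ once more. Since the skeletal inclusion $X^1 = [0,1] \incl X$ is closed (by the proposition at the end of \cref{sec:background}), \cref{lem:closed-inclusion}\eqref{it:C} identifies $c$ on $[0,1]$ with its standard closure, so $0 \in c(\{\tfrac{1}{k} : k \geq 1\}) \subseteq c^2 B$. Hence $c^2 B \neq cB$, the operator $c$ fails to be idempotent, and $(X,c)$ is not a topological space; by contrast the colimit in $\Top$ is its topological modification and so is topological. The two constructions therefore disagree. The only genuine subtlety is the closure computation of the previous paragraph---namely that the one-step colimit closure records the attaching points but not their limit $0$---and this precise failure of idempotency is exactly what separates the $\Cl$-colimit from the $\Top$-colimit.
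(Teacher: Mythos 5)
Your proposal is correct and follows essentially the same route as the paper, which derives the proposition from \cref{ex:3}: the same cells, the same constant attaching maps at $\tfrac{1}{k}$, and the same failure of idempotency detected by the point $0$ entering only at the second closure of the union of open cells. The paper presents \cref{ex:3} as a single pushout and leaves the translation into a skeleton-by-skeleton finite-type CW structure implicit; your explicit computation of the colimit closure $cB=\bigcup_{n} j_n c_n j_n^{-1}B$ supplies exactly that omitted detail.
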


Note the CW complexes in \cref{ex:2,ex:3} are both countable.
\cref{ex:2} is finite dimensional but is not of finite type;
\cref{ex:3} is of finite type but is not finite dimensional.
It follows that neither \cref{prop:ex-2} nor \cref{prop:ex-3} is a special case of the other.
The pushout of \cref{ex:3} also shows the following.

\begin{proposition}
    There exists a CW closure space $(X,c)$ with a CW structure 
    for which every $(X^n,c_n)$ is a topological space, 
    but $(X,c)$ is not a topological space.
\end{proposition}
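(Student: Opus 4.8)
The plan is to realize the pushout of \cref{ex:3} as the colimit of a genuine CW filtration in which the cells are attached one dimension at a time. I would take $A = \varnothing$, let $(X^0,c_0) = \{0,1\}$ be two $0$-cells with the discrete topology, and obtain $(X^1,c_1) = [0,1]$ (with its standard topology) by attaching a single $1$-cell along the map $S^0 = \{-1,1\} \to X^0$ sending $-1 \mapsto 0$ and $1 \mapsto 1$. Then for each $m \geq 2$ I would obtain $(X^m,c_m)$ from $(X^{m-1},c_{m-1})$ by attaching a single $m$-cell $D^m$ along the constant map $\varphi_m \colon S^{m-1} \to X^{m-1}$ with value $\tfrac{1}{m-1} \in [0,1] = X^1 \subseteq X^{m-1}$. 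This is precisely the family of attachments of \cref{ex:3} (attaching $D^{k+1}$ at $\tfrac1k$, with $m=k+1$), reorganized so that exactly one cell, of dimension $m$, is attached at stage $m$; hence the colimit $(X,c)$ has the same underlying set and closure operator as the pushout of \cref{ex:3}. In particular $(X,A)$ is a CW closure space.

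Next I would verify by induction that every $(X^n,c_n)$ is a topological space. The base case $(X^1,c_1)=[0,1]$ is standard. For the inductive step, $S^{m-1} \incl D^m$ is a closed inclusion (\cref{lem:pushout-closed-inclusions}), so \cref{lem:closed-inclusion-pushout} gives $X^m = X^{m-1} \amalg (D^m \setminus S^{m-1})$ with closure $c_m B = c_{m-1}(B \cap X^{m-1}) \cup \Phi_m\bigl(c_{D^m}(B \cap (D^m \setminus S^{m-1}))\bigr)$, where $\Phi_m$ is the characteristic map of the new cell. I would then check directly that $c_m^2 = c_m$: iterating the closure can only introduce a genuinely new point through a boundary point of $D^m$, but $\Phi_m$ sends every point of $S^{m-1}$ to the single point $\tfrac{1}{m-1}$, which is closed in the topological space $X^{m-1}$; combined with idempotence of $c_{m-1}$ and of $c_{D^m}$ this forces $c_m^2 B = c_m B$. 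Thus $(X^m,c_m) \in \Top$ for all $m$.

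Finally I would show $(X,c) \notin \Top$ by reproducing the computation of \cref{ex:3} at the level of the colimit. By the colimit formula, $cB = \bigcup_{n} j_n c_n(B \cap X^n)$ for the inclusions $j_n \colon X^n \incl X$. Taking $B = \coprod_{k \geq 1}(D^{k+1} \setminus S^k)$, the union of all open cells, each finite stage $X^n$ contains only the open cells and attaching points $\tfrac11,\dots,\tfrac{1}{n-1}$, a finite closed set not containing $0$; taking the union over $n$ yields $cB = B \cup \{\tfrac1k : k \geq 1\}$, with $0 \notin cB$. Applying $c$ once more, the set $\{\tfrac1k\} \subset [0,1] = X^1$ already has $0$ in its closure inside $X^1$, so $0 \in c^2 B$. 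Hence $c^2 B \neq cB$, and $(X,c)$ is not a topological space.

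The only non-formal step, and thus the main obstacle, is the inductive verification that each finite skeleton stays in $\Top$, i.e. the identity $c_m^2 = c_m$. It requires carefully tracking how the closure on the disk $D^m$ interacts, through the collapse of the bounding sphere to a single \emph{closed} point, with the (idempotent) closure on $X^{m-1}$; the contrast with \cref{ex:1}, where the base point fails to be closed and the single-cell attachment already leaves $\Top$, is exactly what makes every finite stage here topological while the colimit is not.
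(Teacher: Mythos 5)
Your proposal is correct and is essentially the paper's argument: the paper simply points at the pushout of \cref{ex:3}, and you have spelled out the implicit reorganization into a one-cell-per-dimension CW filtration together with the computation $0 \in c^2B \setminus cB$. The only remark worth adding is that your hands-on inductive step could be replaced by a citation: each $\varphi_m$ is constant at a point that is closed in $X^{m-1}$, hence a closed map, so \cref{prop:1} (or \cref{thm:main}) immediately gives $(X^m,c_m) \in \Top$.
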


\section{Constructions that agree in \texorpdfstring{$\Top$}{Top} and \texorpdfstring{$\Cl$}{Cl}} \label{sec:do-agree}

We prove that under suitable hypotheses, various standard constructions on topological spaces do  produce the same spaces when performed in the category of closure spaces.



\begin{theorem} \label{prop:1}
    Consider the pushout of closure spaces,
    \begin{equation*} 
        \begin{tikzcd} 
            (A,c_A) \ar[r,"f"] \ar[d,hookrightarrow,"i"] & (Y,c_Y) \ar[d,dashed,hookrightarrow,"j"] \\
            (X,c_X) \ar[r,dashed,"g"] & (Z,c_Z)   
        \end{tikzcd}
    \end{equation*}   
    where 
    $(A,c_A)$, $(X,c_X)$ and $(Y,c_Y)$ are topological spaces, and
    $i$ is a closed inclusion.
    \begin{enumerate}
        \item 
            If $f$ is closed then this pushout agrees with the one in $\Top$.
        \item \label{it:prop-1-b}
            If 
            this pushout agrees with the one in $\Top$ and 
            for every closed set $C \subset A$ there is a set $B \subset X \setminus A$ such that $(c_X B) \cap A = C$, 
            then $f$ is closed.
    \end{enumerate}

\end{theorem}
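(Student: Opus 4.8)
The plan is to work from the explicit description of the pushout in $\Cl$ provided by \cref{lem:closed-inclusion-pushout}. Since $i$ is a closed inclusion, we may take $Z = Y \amalg (X \setminus A)$ with $g$ as in that lemma and, for $B \subset Z$, $c_Z B = c_Y(B \cap Y) \cup g\bigl(c_X(B \cap (X \setminus A))\bigr)$. As recalled in \cref{sec:cl-top}, the pushout in $\Top$ is the topological modification of this one, and $\tau$ fixes underlying sets and maps. Hence the two pushouts agree if and only if $(Z,c_Z)$ is already a topological space, that is, if and only if $c_Z$ is idempotent. Both parts of the theorem will then follow from a single description of when $c_Z c_Z B = c_Z B$.

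For $B \subset Z$, I would write $B_Y = B \cap Y$ and $B_X = B \cap (X \setminus A)$ and set $P = c_X B_X$, which is closed in $X$ since $X$ is topological. Splitting $g(P)$ along $P = (P \cap A) \cup (P \cap (X \setminus A))$, the $Y$-part of $c_Z B$ is $c_Y(B_Y) \cup f(P \cap A)$ and its $(X \setminus A)$-part is $P \cap (X \setminus A)$. The crucial step is the identity $c_X\bigl(P \cap (X \setminus A)\bigr) = P$, which follows by pinching: $B_X \subset P \cap (X \setminus A) \subset P$ and $c_X P = P$. Applying $c_Z$ a second time, this identity shows the $(X \setminus A)$-part is unchanged while the $Y$-part becomes $c_Y\bigl(c_Y(B_Y) \cup f(P \cap A)\bigr)$. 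Consequently $c_Z$ is idempotent if and only if, for every $B$, the set $c_Y(B_Y) \cup f(P \cap A)$ is closed in $Y$.

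With this criterion both implications are short. For (a), if $f$ is closed then $P \cap A$ is closed in $A$ by \cref{lem:closed-inclusion}\eqref{it:C} (as $P$ is closed in $X$ and $A$ is closed in $X$), so $f(P \cap A)$ is closed in $Y$; since $c_Y(B_Y)$ is closed as well, the union is closed and $c_Z$ is idempotent, giving agreement with $\Top$. For (b), assume idempotency together with the stated hypothesis. Given a closed set $C \subset A$, choose $B \subset X \setminus A$ with $(c_X B) \cap A = C$; viewing $B$ as a subset of $Z$ gives $B_Y = \varnothing$ and $P \cap A = C$, so the $Y$-part of $c_Z B$ is exactly $f(C)$. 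Idempotency forces this set to be closed, i.e.\ $c_Y f(C) = f(C)$, and since $C$ ranges over all closed subsets of $A$, the map $f$ is closed.

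The main obstacle is precisely the reduction in the second paragraph: confining the failure of idempotency to the $Y$-coordinate via the identity $c_X(P \cap (X \setminus A)) = P$, which guarantees that a second application of $c_Z$ only re-closes in $Y$. Once this is in place the rest is bookkeeping, and the hypothesis in (b) is exactly what is needed to realize every closed $C \subset A$ as $P \cap A$ for a suitable $B$, so that no closed subset of $A$ escapes the conclusion.
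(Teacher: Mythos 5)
Your proposal is correct and takes essentially the same route as the paper: both start from the explicit description of $c_Z$ in \cref{lem:closed-inclusion-pushout}, both split $c_X(B\cap(X\setminus A))$ into its $A$-part and $(X\setminus A)$-part, both check that re-closing the $(X\setminus A)$-part produces nothing new, and both reduce the question of idempotency of $c_Z$ to whether the image under $f$ of the (closed) $A$-part is closed in $Y$. The only difference is organizational: you package the computation as a single if-and-only-if criterion, so part (b) comes out directly, whereas the paper argues the two directions separately and proves (b) by contradiction.
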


    Note that condition in \eqref{it:prop-1-b} is satisfied by a coproduct of inclusions of boundary spheres into disks. 

\begin{proof}[Proof of \cref{prop:1}]
    To prove the first statement, suppose that $f$ is closed.
    By \cref{lem:closed-inclusion-pushout}, we may
    take $Z = Y \amalg (X \setminus A)$ and corresponding maps $j$ and $g$.
    Let $B \subset Z$.
    We want to show that $c_Z c_Z B = c_Z B$.

    Let $B_1 = B \cap Y$ and $B_2 = B \cap (X \setminus A)$.
    By \cref{lem:closed-inclusion-pushout}, 
    \begin{equation} \label{eq:eB1}
        c_ZB = c_Y B_1 \cup g c_X B_2.
    \end{equation}
    Let $C_1 = (c_XB_2) \cap A$ and $C_2 = (c_XB_2) \cap (X \setminus A)$.
    Then
    \begin{equation} \label{eq:eB}
        c_Z B = c_Y B_1 \cup g C_2 \cup f C_1.
    \end{equation}
    Thus
    \begin{equation*}
        c_Z^2 B = c_Z c_Y B_1 \cup c_Z g C_2 \cup c_Z f C_1.
    \end{equation*}
    By the definition of closure, $c_Z B \subset c_Z^2 B$.
    It remains to show that $c_Z^2 B \subset c_Z B$.

    By \cref{lem:closed-inclusion-pushout}, 
    since $Y$ is a topological space, and by \eqref{eq:eB}, 
    $c_Z c_Y B_1 = c_Y^2 B_1 = c_Y B_1 \subset c_Z B$.
    By \cref{lem:closed-inclusion-pushout}, 
    by the definition $C_2$, 
    since $X$ is a topological space, 
    and by \eqref{eq:eB1},
    $c_Z g C_2 = g c_X C_2 \subset g c_X^2 B_2 = g c_X B_2 \subset c_Z B$.
    It remains to show that $c_Z f C_1 \subset c_Z B$.

    By definition, $C_1 \subset c_A C_1 \subset A$.
    Since $i$ is a closed inclusion, by the definition of $C_1$, and since $X$ is a topological space,
    $c_A C_1 = c_X C_1 \subset c_X^2 B_2 = c_X B_2$.
    Therefore, $c_A C_1 \subset c_X B_2 \cap A = C_1$.
    Thus $c_A C_1 = C_1$, i.e. $C_1$ is closed in $A$.
    Furthermore, since $f$ is closed, 
    $fC_1$ in $Y$.
    Finally,
    by \cref{lem:closed-inclusion-pushout}, 
    since $C_1$ is closed in $A$, 
    since $f$ is a map of closure spaces,
    since $(Y,c_Y)$ is a topological space,
    since $fC_1$ is closed in $Y$, 
    and by \eqref{eq:eB},
    $c_Z f C_1 = c_Y f C_1 = c_Y f c_A C_1 \subset c_Y^2 f C_1 = c_Y f C_1 = f C_1 \subset c_Z B$.

    For the second statement, 
    suppose that 
    the pushout agrees with the one in $\Top$ and that
    for every closed $C \subset A$ there is a $B \subset X \setminus A$ such that $(c_XB) \cap A = C$.
    Assume that $f$ is not closed.
    By \cref{lem:closed-inclusion-pushout}, we may
    take $Z = Y \amalg (X \setminus A)$ and corresponding maps $j$ and $g$.
    Since $f$ is not closed, there is a set $C \subset A$, with $c_AC = C$ and $c_YfC \supsetneq fC$.
    By assumption
    there is a set $B \subset X \setminus A$ with $(c_X B) \cap A = C$. We will show that $c_Z^2 B \neq c_Z B$, which is a contradiction. Therefore $f$ is closed.

    By \cref{lem:closed-inclusion-pushout}, $c_Z B = gc_XB$.
    Let $B' = c_XB \cap (X \setminus A)$.
    Then $c_XB = B' \cup C$ and hence $c_ZB = gB' \cup gC = gB' \cup fC$.
    Now $fC \subset Y \subset Z$ and since $B' \subset X \setminus A$, $gB' \subset X \setminus A \subset Z$. Hence
    $(c_Z B) \cap Y = fC$.
    By \cref{lem:closed-inclusion-pushout}, $c_Z^2 B \supset c_Z((c_Z B) \cap Y) = c_ZfC 
    =
    c_YfC$.
    Thus
    we have that $c_Z^2 B \cap Y \supset (c_YfC) \cap Y = c_YfC \supsetneq fC = (c_Z B) \cap Y$.
    Therefore $c_Z^2 B \neq c_Z B$.    
\end{proof}

\begin{theorem} \label{thm:main}
    Let $(X,c)$ be a closure space obtained from a 
    topological space by a finite sequence of  cell-attaching maps that are closed.
    Then $(X,c)$ is a topological space.
\end{theorem}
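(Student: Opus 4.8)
The plan is to induct on the number of cell-attaching steps, using \cref{prop:1} as the engine at each stage. Write the construction as a tower
\[
    (X^0,c_0) \incl (X^1,c_1) \incl \cdots \incl (X^m,c_m) = (X,c),
\]
in which $(X^0,c_0)$ is the given topological space and, for each $1 \le n \le m$, the space $(X^n,c_n)$ is obtained from $(X^{n-1},c_{n-1})$ by a cell-attaching map $\varphi^n$ that is closed. I would prove, by induction on $n$, that every $(X^n,c_n)$ is a topological space; the case $n=0$ is the hypothesis, and the case $n=m$ is the desired conclusion.

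For the inductive step, assume $(X^{n-1},c_{n-1})$ is a topological space and consider the attaching pushout
\[
\begin{tikzcd}
\coprod_{k} S_k^{n_k-1} \ar[r,"\varphi^n"] \ar[d,hookrightarrow,"i"'] & (X^{n-1},c_{n-1}) \ar[d,hookrightarrow,dashed] \\
\coprod_{k} D_k^{n_k} \ar[r,dashed] & (X^n,c_n).
\end{tikzcd}
\]
I would apply \cref{prop:1} to this square, with $\varphi^n$ in the role of $f$ and $(X^{n-1},c_{n-1})$ in the role of $(Y,c_Y)$. The hypotheses are met: the coproducts $\coprod_k S_k^{n_k-1}$ and $\coprod_k D_k^{n_k}$ are topological spaces, since each sphere and disk is a topological space and the coproduct closure formula $c(\coprod_i A_i) = \coprod_i c(A_i)$ shows that a coproduct of topological spaces is again one; the space $(X^{n-1},c_{n-1})$ is a topological space by the induction hypothesis; the boundary inclusion $i$ is a closed inclusion by \cref{lem:pushout-closed-inclusions}; and $\varphi^n$ is closed by assumption. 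Hence the first part of \cref{prop:1} gives that the pushout $(X^n,c_n)$ in $\Cl$ agrees with the pushout in $\Top$. Since the latter is a topological space, so is $(X^n,c_n)$, which closes the induction.

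I expect the only genuine obstacle to be organizing the induction so that the hypothesis of \cref{prop:1} stays available at every stage: that result requires the \emph{target} of the attaching map to be a topological space, so the argument really does rely on propagating topologicality up the entire finite tower rather than appealing to $(X^0,c_0)$ alone. The remaining points --- that coproducts of disks and spheres are topological, that the boundary inclusion is closed, and that ``agrees with the pushout in $\Top$'' yields a topological space --- are routine, the last because the $\Top$-pushout is by construction the topological modification of the $\Cl$-pushout, so agreement forces the $\Cl$-pushout to equal its own topological modification.
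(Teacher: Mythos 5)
Your proposal is correct and follows exactly the route the paper intends: the paper's own proof is the one-line remark that the result ``follows from induction on \cref{prop:1},'' and your write-up simply supplies the details of that induction (topologicality of the coproducts of disks and spheres, closedness of the boundary inclusion via \cref{lem:pushout-closed-inclusions}, and propagation of the inductive hypothesis so that the target of each attaching map remains topological). Nothing is missing.
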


\begin{proof}
    This result
    follows from induction on 
\cref{prop:1}.
\end{proof}

Note that \cref{thm:main} allows arbitrarily many cells to be attached at a time.
For example, consider the real numbers obtained 
from the discrete space of integers
by the map that attaches countably many $1$-cells.

\begin{theorem} \label{prop:2}
    Consider the pushout of closure spaces,
    \begin{equation*} 
        \begin{tikzcd} 
            (A,c_A) \ar[r,"f"] \ar[d,hookrightarrow,"i"] & (Y,c_Y) \ar[d,dashed,hookrightarrow,"j"] \\
            (X,c_Z) \ar[r,dashed,"g"] & (Z,c_Z)   
        \end{tikzcd}
    \end{equation*}   
    where 
    $(A,c_A)$, $(X,c_Z)$ and $(Y,c_Y)$ are compactly generated weak Hausdorff topological spaces,
    $i$ is a closed inclusion, 
    and $f$ is closed.
    Then this pushout agrees with the one in $\Top$ and the one in the category of compactly generated weak Hausdorff spaces.    
\end{theorem}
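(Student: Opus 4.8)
The plan is to split the two claimed agreements into independent pieces. First I would compare the pushout in $\Cl$ with the one in $\Top$; then I would compare the pushout in $\Top$ with the one in the category $\cat{CGWH}$ of compactly generated weak Hausdorff spaces.

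For the first comparison I would simply apply the first part of \cref{prop:1}. A compactly generated weak Hausdorff space is in particular a topological space, so $(A,c_A)$, $(X,c_X)$ and $(Y,c_Y)$ are topological; together with the hypotheses that $i$ is a closed inclusion and that $f$ is closed, this is exactly the input of \cref{prop:1}, and it yields that the pushout $(Z,c_Z)$ formed in $\Cl$ is a topological space agreeing with the pushout in $\Top$. By \cref{lem:closed-inclusion-pushout} we may moreover take $Z = Y \amalg (X \setminus A)$ with $j:Y \incl Z$ a closed inclusion, so $Y$ is closed in $Z$ and its complement is the image of the open set $X \setminus A$; this concrete description is what feeds the second comparison.

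For the second comparison I would use the standard adjunctions relating these categories to $\Top$. The inclusion $\cat{CG}\incl\Top$ of $k$-spaces is a left adjoint (its right adjoint is the $k$-ification $k$), hence preserves colimits; thus the $\Top$-colimit of a diagram of $k$-spaces is again a $k$-space and equals the $\cat{CG}$-colimit. The inclusion $\cat{CGWH}\incl\cat{CG}$ is reflective with left adjoint the weak Hausdorffification $h$, so the $\cat{CGWH}$-colimit is $h$ applied to the $\cat{CG}$-colimit. Combining these, the pushout in $\cat{CGWH}$ is $h(Z)$, where $Z$ is the $\Top$-pushout computed above. It therefore suffices to show that $Z$ is \emph{already} an object of $\cat{CGWH}$: then $h(Z)=Z$ and the pushout in $\cat{CGWH}$ coincides with the ones in $\Top$ and $\Cl$.

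Since $Z$ is a quotient of the coproduct $X \amalg Y$ of $k$-spaces, it is a $k$-space, so the only remaining, and main, obstacle is to prove that $Z$ is weak Hausdorff. This is precisely the classical fact that in $\cat{CGWH}$ the pushout of a closed inclusion along an arbitrary map is created by the forgetful functor to $\Top$, which I would cite from the standard references on compactly generated spaces. If a self-contained argument is preferred, I would use the characterization that a $k$-space is weak Hausdorff exactly when the image of every continuous map $u:K\to Z$ from a compact Hausdorff space $K$ is closed: the preimage $u^{-1}(Y)$ is closed in $K$, hence compact, and maps into the closed weak Hausdorff subspace $Y$, so that part of the image is closed in $Z$; the remaining part lies in the open complement $Z \setminus Y \cong X \setminus A$, where the closedness of $A$ in $X$ is used to control the relevant compact sets inside the weak Hausdorff space $X$, and assembling the two pieces shows $u(K)$ is closed. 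The crux of the whole proof is thus this weak Hausdorff verification; once it is in place, the two reductions combine to give that the pushout agrees with the one in $\Top$ and with the one in $\cat{CGWH}$.
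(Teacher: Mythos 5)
Your proposal is correct and follows essentially the same route as the paper: the comparison of the $\Cl$- and $\Top$-pushouts is exactly an application of the first part of \cref{prop:1}, and the comparison with the pushout in compactly generated weak Hausdorff spaces reduces, via the adjunctions you describe, to the classical fact that a $\Top$-pushout of compactly generated weak Hausdorff spaces along a closed inclusion is again compactly generated weak Hausdorff, which the paper simply cites (Rezk, Proposition 10.6) rather than unwinding the adjunctions. The only caveat is that your optional self-contained sketch of the weak Hausdorff verification is incomplete as stated --- the portion of $u(K)$ lying in $Z \setminus Y$ is not disposed of by the closed-union argument you gesture at --- but since your primary plan is to cite the standard reference for this step, exactly as the paper does, this does not affect the correctness of the argument.
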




\begin{proof}
    By \cref{prop:1}, this pushout agrees with the one in $\Top$.
    It is a result of Rezk~\cite[Proposition 10.6]{Rezk:2018}, that a pushout in $\Top$ 
    of a diagram of cofibrantly generated weak Hausdorff spaces 
    along a closed inclusion 
    agrees with the pushout in the full subcategory of compactly generated weak Hausdorff spaces.
    In particular, $(Z,c_Z)$ is a cofibrantly generated weak Hausdorff topological space.
\end{proof}

\begin{theorem} \label{cor:main}
    The constructions of finite CW complexes relative to a compactly generated weak Hausdorff topological space in $\Cl$ and $\Top$ agree.    
\end{theorem}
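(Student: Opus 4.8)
The plan is to realize an arbitrary finite relative CW complex as a \emph{finite} composite of single-stage cell attachments \eqref{eq:attaching} and to show by induction that \cref{prop:2} applies at every stage. A finite relative CW complex relative to $A$ attaches only finitely many cells in all of \eqref{eq:cw}, so only finitely many of the stages carry cells and each such stage attaches finitely many cells; hence there is an $N$ with $X = X^N$, and the colimit \eqref{eq:cw} reduces to the finite tower $A = X^{-1} \incl X^0 \incl \cdots \incl X^N = X$. Each inclusion $X^{n-1} \incl X^n$ fits into a pushout of the form \eqref{eq:attaching} with \emph{finite} indexing set $K$.

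I would prove by induction on $n$ that $(X^n,c_n)$ is a compactly generated weak Hausdorff topological space and that the pushout producing it in $\Cl$ agrees with the one in $\Top$. The base case is $X^{-1} = A$, which is compactly generated weak Hausdorff by hypothesis. For the inductive step I would check the hypotheses of \cref{prop:2} for the stage-$n$ pushout \eqref{eq:attaching}. The inclusion $i$ is a closed inclusion by \cref{lem:pushout-closed-inclusions}. The source $\coprod_{k \in K} S_k^{n_k-1}$ and target $\coprod_{k \in K} D_k^{n_k}$ are finite disjoint unions of spheres and disks, hence compact Hausdorff and in particular compactly generated weak Hausdorff; and $X^{n-1}$ is compactly generated weak Hausdorff by the inductive hypothesis. \cref{prop:2} then gives both that $(X^n,c_n)$ agrees with the pushout in $\Top$ and that it is again compactly generated weak Hausdorff, closing the induction.

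The one hypothesis of \cref{prop:2} that requires genuine content is that the cell-attaching map $\varphi$ is closed, and this is the step I expect to be the crux. Here I would use that $K$ is finite, so the source $\coprod_{k \in K} S_k^{n_k-1}$ is compact Hausdorff; any closed subset $C$ of it is then again compact Hausdorff, and since $X^{n-1}$ is weak Hausdorff the continuous image $\varphi(C)$ is closed in $X^{n-1}$. Thus $\varphi$ carries closed sets to closed sets, i.e.\ $\varphi$ is closed as a map of closure spaces. This is precisely where both finiteness (to force compactness of the source) and the weak Hausdorff property (to force closedness of compact images) are used; dropping either would break the argument, consistent with the failures exhibited in \cref{ex:2,ex:3}.

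Finally, having shown that each of the finitely many stages agrees in $\Cl$ and $\Top$, I would conclude that $X = X^N$ agrees in the two categories, which is the assertion of the theorem. The argument is an iterated application of \cref{prop:2}, in the same spirit as the induction underlying \cref{thm:main}, with the weak Hausdorff hypothesis supplying the closedness of the attaching maps for free.
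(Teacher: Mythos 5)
Your proposal is correct and follows essentially the same route as the paper: the attaching map is closed because its source is compact Hausdorff and the target is weak Hausdorff, so \cref{prop:2} applies at each stage, and the result follows by induction over the finitely many cell attachments. The only difference is that you spell out the induction and the preservation of the compactly generated weak Hausdorff property more explicitly than the paper does.
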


\begin{proof}
    Let $(A,c_A)$ be a compactly generated weak Hausdorff topological space.
    Consider a cell-attaching map $\varphi: S^{n-1} \to (A,c_A)$.
    Since $S^{n-1}$ is a compact topological space and $(A,c_A)$ is a weak Hausdorff topological space,
    by the definition of weak Hausdorff,
    $\varphi$ is closed.
    Now apply \cref{prop:2} to the cell attachment given by $\varphi$.
    \cref{cor:main} follows by induction on this argument.
\end{proof}
 
\section*{Acknowledgments}

    The author would like to thank the anonymous referee for helpful comments that led to many improvements.
 This research was partially supported 
 by the National Science Foundation (NSF) grant DMS-2324353
 and
 by the Southeast Center for Mathematics and Biology, an NSF-Simons Research Center for Mathematics of Complex Biological Systems, under NSF Grant No. DMS-1764406 and Simons Foundation Grant No. 594594.
The author would like to thank Nikola Mili\'{c}evi\'{c} for helpful discussions.

\printbibliography

\end{document}